\newtheorem{theorem}{Theorem}[section]
\theoremstyle{definition}
\newtheorem{definition}[theorem]{Definition}
\theoremstyle{remark}
\newtheorem{remark}[theorem]{Remark}
\numberwithin{equation}{section}
\DeclareMathOperator{\vol}{vol}
\DeclareMathOperator{\conv}{conv}
\DeclareMathOperator{\inte}{int}
\renewcommand{\epsilon}{\varepsilon}
\newcounter{fig}
\newcommand{\f}{\refstepcounter{fig} Fig. \arabic{fig}. }
\begin{document}
\ifpdf
\DeclareGraphicsExtensions{.pdf, .jpg, .tif, .mps}
\else
\DeclareGraphicsExtensions{.eps, .jpg, .mps}
\fi
	
\title[Elementary approach to\dots]{Elementary approach to closed billiard trajectories in asymmetric normed spaces}

\author{Arseniy~Akopyan{$^\spadesuit$}}

\email{akopjan@gmail.com}

\author{Alexey~Balitskiy{$^\clubsuit$}}

\email{alexey\_m39@mail.ru}

\author{Roman~Karasev{$^\diamondsuit$}}

\email{r\_n\_karasev@mail.ru}
\urladdr{http://www.rkarasev.ru/en/}

\author{Anastasia Sharipova{$^\heartsuit$}}

\email{independsharik@yandex.ru}

\thanks{$^\spadesuit$ Supported by People Programme (Marie Curie Actions) of the European Union's Seventh Framework Programme (FP7/2007-2013) under REA grant agreement n$^\circ$[291734].}
\thanks{{$^\spadesuit$}{$^\diamondsuit$} Supported by the Dynasty foundation.}
\thanks{{$^\spadesuit$}{$^\clubsuit$}{$^\diamondsuit$} Supported by the Russian Foundation for Basic Research grant 15-31-20403 (mol\_a\_ved).}
\thanks{{$^\clubsuit$}{$^\diamondsuit$} Supported by the Russian Foundation for Basic Research grant 15-01-99563 A}

\address{{$^\spadesuit$} Institute of Science and Technology Austria (IST Austria), Am Campus~1, 3400 Klosterneuburg, Austria}
\address{{$^\clubsuit$}{$^\diamondsuit$}{$^\heartsuit$}Moscow Institute of Physics and Technology, Institutskiy per. 9, Dolgoprudny, Russia 141700}
\address{{$^\spadesuit$}{$^\clubsuit$}{$^\diamondsuit$} Institute for Information Transmission Problems RAS, Bolshoy Karetny per. 19, Moscow, Russia 127994}

\subjclass[2010]{52A20, 52A23, 53D35}
\keywords{billiards, Minkowski norm, Mahler's conjecture}

\begin{abstract}
We apply the technique of K\'aroly Bezdek and Daniel Bezdek to study billiard trajectories in convex bodies, when the length is measured with a (possibly asymmetric) norm. We prove a lower bound for the length of the shortest closed billiard trajectory, related to the non-symmetric Mahler problem. With this technique we are able to give short and elementary proofs to some known results.
\end{abstract}

\maketitle

\section{Introduction}

In this paper we consider billiards in convex bodies and estimate the minimal length of a closed billiard trajectory. This kind of estimates is rather useful in different practical applications, see further references on this subject in~\cite{bb2009}. 

In~\cite{aao2012} Shiri Artstein-Avidan and Yaron Ostrover presented a unified symplectic approach to handle billiards in a convex body $K\subset V$ (here $V$ is a real vector space), whose trajectory length (and therefore the reflection rule) is given by a norm with unit ball $T^\circ$ (polar to a body $T\subset V^*$ containing the origin); throughout this paper we use the \emph{possibly non-standard} notation $\|\cdot\|_T$ for this norm with $T$ lying in the dual space. 

We emphasize that in this work the norm need not be symmetric, that is need not satisfy $\|q\| = \|-q\|$. Usually the term ``Minkowski billiard'' was used, but Minkowski norms are usually assumed to be symmetric, and we do not restrict ourselves to this particular case. The idea of~\cite{aao2012} is to interpret a billiard trajectory in $K$ with norm $\|\cdot \|_T$ as a characteristic on the boundary of the convex body $K\times T\subset V\times V^*$. The space $V\times V^*$ is the cotangent bundle of $V$ and carries a natural symplectic structure, and the surface $\partial (K\times T)$, in a sense, carries a contact structure, although some effort has to be made to handle it because it is not smooth at $\partial K\times \partial T$.

The symplectic approach was rather useful and gave certain results about the number $\xi_T(K)$, that is the minimal $\|\cdot\|_T$-length of a closed billiard trajectory in $K$. In particular, in~\cite{aao2012} this number was shown to be equal to the Hofer--Zehnder capacity $c_{HZ}(K\times T)$, and it was proved that the number $\xi_T(K)$ is monotone in $T$ and $K$ under inclusions, and satisfies a certain Brunn--Minkowski type inequality. In the next paper~\cite{aaok2013} the inequality
\begin{equation}
\label{equation:symm-estimate}
\xi_{K^\circ}(K) \ge 4
\end{equation}
for centrally symmetric convex bodies was established with rather elementary techniques and it was noticed that, assuming the Viterbo conjecture for convex bodies $X\subset \mathbb R^{2n}$
\[
\vol(X) \ge \frac{c_{HZ}(X)^n}{n!},
\] 
the estimate (\ref{equation:symm-estimate}) would imply the famous Mahler conjecture~\cite{ma1939}
\[
\vol K\cdot \vol K^\circ \ge \frac{4^n}{n!}.
\]
Mahler's conjecture is known so far in a weaker form with $\frac{\pi^n}{n!}$ on the right hand side, this is a result due to Greg Kuperberg~\cite{ku2008}. More detailed information on this conjecture is given in the blog post~\cite{tao2007} of Terence Tao and the paper~\cite{aaok2013}. For the Viterbo conjecture and its possible generalizations, we recommend the paper~\cite{apb2012} and the references therein.

In this paper we invoke a more elementary and efficient approach, developed by K\'aroly Bezdek and Daniel Bezdek in~\cite{bb2009} for the Euclidean norm. It turns out that this approach remains valid without change for possibly asymmetric norms\footnote{These ideas for the Euclidean norm in the plane first appeared in~\cite{bc1989}; it was already mentioned there that more arbitrary distances (norms) can be considered similarly.}; it allows to give elementary proofs of most results of~\cite{aao2012}, worry less about the non-smoothness issues, and establish the inequality
\[
\xi_{K^\circ}(K) \ge 2 + 2/n
\]
for possibly non-symmetric convex bodies $K$ containing the origin. The latter inequality is related to the non-symmetric Mahler conjecture, see the discussion in Section~\ref{section:mahler} below.

\subsection*{Acknowledgments.}
The authors thank Yaron Ostrover for numerous remarks and corrections and the unknown referee for a huge list of corrections that helped us improve the text.

\section{Bezdeks' approach to billiards}

Let us show how the results of~\cite{aao2012} can be approached using the elementary technique of~\cite{bb2009}. First, we consider an $n$-dimensional real vector space $V$, a convex body $K\subset V$, and define
\[
\mathcal P_m(K) = \{(q_1,\ldots, q_m) : \{q_1,\ldots, q_m\} \ \text{does not fit into}\ \alpha K+t\ \text{with}\ \alpha\in (0,1),\ t\in V \}.
\]
Observe that ``does not fit into $\alpha K+t$, with $\alpha\in (0,1)$, $t\in V$'' is equivalent to ``does not fit into the interior of $K+t$ with $t\in V$''.

\begin{center}
\includegraphics{pic/bezdeks-billiards-figures-3}	

\f An element of $\mathcal P_3(K)$.
\end{center}

Then we consider a norm on $V$ such that the unit ball $T\subset V^*$ of its dual is smooth. We denote this norm by $\|\cdot \|_T$ following~\cite{aao2012}. Note that this norm need not be reversible in what follows, that is $\|q\|_T$ need not be equal to $\|-q\|_T$. 

We define the length of the closed polygonal line
\[
\ell_T (q_1,\ldots, q_m) = \sum_{i=1}^m \|q_{i+1} - q_i\|_T,
\]
where indices are always modulo $m$. So the renovated result of~\cite{bb2009} reads:

\begin{theorem}
\label{theorem:bezdeks}
For smooth convex bodies $K\subset V$ and $T\subset V^*$, the length of the shortest closed billiard trajectory in $K$ with norm $\|\cdot \|_T$ equals
\[
\xi_T(K) = \min_{m\ge 1} \min_{P\in \mathcal P_m(K)} \ell_T(P).
\]
Moreover, the minimum is attained at $m\le n + 1$.
\end{theorem}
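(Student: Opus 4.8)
The plan is to prove the theorem in two halves: first that any closed billiard trajectory yields a configuration in some $\mathcal P_m(K)$ whose $\|\cdot\|_T$-length is at least the claimed minimum, and conversely that a minimizing configuration traces out a genuine billiard trajectory of the same length. Finally I would trim the number of vertices down to $n+1$ using a dimension-counting (Helly/Carath\'eodory-type) argument.

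First I would unpack the defining condition of $\mathcal P_m(K)$. By the remark in the text, $(q_1,\ldots,q_m)$ lies in $\mathcal P_m(K)$ exactly when the points cannot be enclosed in any translate of the interior of $K$; equivalently, for every $t\in V$ there is some vertex $q_i\notin \inte(K+t)$. I would dualize this: the points fit into a translate of $\inte K$ iff a certain system of linear inequalities in $t$ (indexed by the support function of $K$) is feasible, so membership in $\mathcal P_m(K)$ becomes an infeasibility statement. This is the right language for the variational/KKT analysis of the minimum and for recognizing the billiard reflection rule. For the easy direction I would take an arbitrary closed billiard trajectory, observe that a trajectory touching $\partial K$ can never be pushed into a smaller homothet (otherwise the contact points would detach and one could shorten or strictly interiorize the polygon), so its vertices lie in some $\mathcal P_m(K)$; this gives $\xi_T(K)\ge \min_m\min_{P}\ell_T(P)$.

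The harder direction is to show that a minimizer $P=(q_1,\ldots,q_m)$ of $\ell_T$ over $\bigcup_m\mathcal P_m(K)$ actually reflects correctly off $\partial K$, i.e.\ satisfies the billiard law dictated by the norm $\|\cdot\|_T$. Here I would set up the Lagrange/KKT conditions for minimizing $\ell_T(q_1,\ldots,q_m)$ subject to the constraint that the configuration stays in $\mathcal P_m(K)$, encoded via the support function of $K$. The gradient of $\|q_{i+1}-q_i\|_T$ is, by smoothness of $T$, the unique boundary point of $T$ exposed in direction $q_{i+1}-q_i$; setting the total gradient at each vertex equal to a nonnegative combination of outer normals of $K$ at the active contact points should reproduce exactly the reflection condition of~\cite{aao2012}. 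The main obstacle I anticipate is precisely this matching: verifying that the first-order stationarity condition of the discrete length functional, with multipliers coming from the $\mathcal P_m(K)$ constraint, coincides with the (possibly asymmetric) Minkowski billiard reflection rule, while keeping careful track of which vertices touch $\partial K$ and in which directions the norm is differentiated. The smoothness of $T$ is what makes the gradient of $\|\cdot\|_T$ single-valued and hence keeps these reflection normals well-defined.

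Finally I would establish the bound $m\le n+1$ on the number of vertices of a minimizer. The idea is that the obstruction ``$P$ does not fit into any translate of $\inte K$'' is itself a Helly-type condition: if the points do not fit, then already some subcollection of at most $n+1$ of them does not fit, by Helly's theorem applied in $V\cong\mathbb R^n$ to the family of translation vectors $t$ for which each individual $q_i\in\inte(K+t)$ (each such set is open convex, and their common intersection is empty precisely when $P\in\mathcal P_m(K)$). Passing to this subcollection cannot increase $\ell_T$ after re-closing the polygon, by the triangle inequality for $\|\cdot\|_T$ (note this uses only the triangle inequality, not symmetry, so it survives in the asymmetric setting). Hence the minimum is attained with $m\le n+1$, completing the proof.
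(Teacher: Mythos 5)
Your skeleton (two inequalities plus a Helly reduction to $n+1$ vertices) matches the paper's, and your Helly step is exactly right: apply Helly in $V$ to the open convex sets $\{t: q_i\in\inte(K+t)\}$ and discard vertices using only the triangle inequality, which survives asymmetry as long as the cyclic order is kept. However, there are two genuine gaps. The first is in the ``easy'' direction: your stated reason that a billiard trajectory cannot be covered by a smaller homothet (``the contact points would detach and one could shorten or strictly interiorize the polygon'') is not an argument, since a closed polygon with all vertices on $\partial K$ can perfectly well fit into a smaller translated homothet (think of a small triangle inscribed near one boundary point of a disk). What actually prevents this is the reflection law itself: summing $p_{i+1}-p_i=-\lambda_i n_K(q_i)$ over a closed trajectory gives $\sum_i\lambda_i n_K(q_i)=0$ with all $\lambda_i>0$, i.e.\ the outer normals at the bounce points positively surround the origin; then for any translate with $\inte(K+t)\ni q_i$ for all $i$ one gets $\langle n_K(q_i),t\rangle>0$ for each $i$ and hence $0>0$ after summation. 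This use of the reflection rule in the first direction is indispensable and absent from your plan.

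The second gap is in the hard direction. ``Set up the KKT conditions for minimizing $\ell_T$ subject to $P\in\mathcal P_m(K)$'' is not yet a workable plan, because $\mathcal P_m(K)$ is defined by a global infeasibility condition (no translate of $\inte K$ covers all the $q_i$ simultaneously); it is neither convex nor given by finitely many smooth constraints on the individual vertices, so there is no off-the-shelf multiplier rule, and ``active contact points'' are not even well defined until you have produced a witnessing translate and supporting hyperplanes. The paper's key device, which you are missing, is the Bezdek--Bezdek certification lemma: after translating so that $K$ contains the minimizer $Q$, the non-fitting of $Q$ into a smaller homothet is certified (possibly after omitting vertices interior to $K$) by halfspaces $H_i^+\supseteq K$ supporting $K$ at $q_i$ whose intersection is nearly bounded, equivalently whose outer normals combine non-negatively to zero. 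One then checks that the entire product set $\mathcal H=\{(q_1,\dots,q_m): q_i\in H_i^-\}$ lies inside $\mathcal P_m(K)$, so $Q$ is also a minimizer over the much simpler set $\mathcal H$, where each vertex ranges over a single halfspace and the Lagrange-multiplier computation genuinely produces the reflection rule (or a fake vertex to be deleted). Without this relaxation from $\mathcal P_m(K)$ to $\mathcal H$, the variational half of your argument does not get off the ground.
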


\begin{remark}
The right hand side of the above formula is well defined without any assumption on the smoothness of $K$ and $T$. In what follows we use it as the definition of $\xi_T(K)$ even when neither $K$ nor $T$ are smooth. It makes sense to call the minimizer in this theorem a \emph{shortest generalized billiard trajectory}, which coincides with a shortest closed billiard trajectory in the case of smooth $K$ and $T$, as we will see from the proof of Theorem~\ref{theorem:bezdeks}.

A shortest generalized billiard trajectory has the following geometrical meaning. Let $p$ be a non-smooth point of $\partial K$, we consider a trajectory $\ell$ through the point $p$ as a trajectory satisfying the reflection rule for \emph{some} normal to $K$ at $p$, that is we can take an arbitrary support hyperplane to $K$ at $p$ as if it were a tangent plane (Figure~\ref{fig:reflection rule}).

The shortest generalized billiard trajectory in an obtuse triangle is shown in~Figure~\ref{fig:obtuse triangle}. It is a well known open problem whether there is a legal (not passing through any vertex) closed billiard trajectory in every obtuse triangle. 
\end{remark}

\begin{center}
\begin{tabular}{p{7.5cm}p{7.5cm}}
	\includegraphics{pic/bezdeks-billiards-figures-23} &
	\includegraphics{pic/bezdeks-billiards-figures-24}	\\
	
	\f \label{fig:reflection rule} The reflection rule at a non-smooth point &
	\f \label{fig:obtuse triangle} The shortest generalized billiard trajectory in an obtuse triangle.
\end{tabular}
\end{center}

\begin{proof}[Proof of Theorem~\ref{theorem:bezdeks}]
The proof in~\cite[Lemma~2.4]{bb2009} is given for the Euclidean norm; the same argument works in this more general case. We reproduce the steps here.

First, let us recall the reflection rule (see~\cite{aaok2013}, for example): For a billiard trajectory $\{q_1, \ldots, q_m\}$ we have in $V^*$
\begin{equation}
\label{equation:reflection}
p_{i+1} - p_i = - \lambda_i n_K(q_i),\quad \lambda_i > 0.
\end{equation}
This reflection rule is obtained by using the Lagrange multiplier method to optimize the expression $\|q_{i+1} - q_i\|_T + \|q_i - q_{i-1}\|_T$ varying $q_i$ under the assumption that $q_i\in \partial K$. There arise the momenta $p_i$ that are obtained from the velocities 
$$
v_i = \frac{q_i - q_{i-1}}{\|q_i - q_{i-1}\|_T}
$$
by taking the differential $p = d\|v\|_T$ (recall that the differential is in the dual space). From this definition it follows that $p_i\in \partial T$, and if we want to go back and determine the velocity $v_i$ we just take
\[
v_i = d\|p_i\|_{T^\circ},
\]
resulting in $v_i\in \partial T^\circ$. Here we need the smoothness of $T$ to define velocities knowing momenta and the smoothness of $K$ to define the normals to $K$.

The normal $n_K$ at a boundary point of the convex body $K$ is also considered as a linear functional in $V^*$ of unit norm, having maximum on $K$ precisely at this point. After summation over $i$ in (\ref{equation:reflection}) we obtain
$$
\sum_i \lambda_i n_K(q_i) = 0,
$$ 
that is the normals at the bounce points $q_i$ surround the origin in $V^*$. This means that the set $\{q_1, \ldots, q_m\}$ cannot be covered by a smaller positive homothet of $K$. Indeed, assume that a homothet $\alpha K + t$ with $\alpha\in(0,1)$ covers all the points $\{q_i\}$, therefore the translate $K+t$ of $K$ contains $q_i$'s in its interior; here we assume that the origin of $V$ is contained in $K$ without loss of generality. Let $n_i$ be the normal (linear form) such that 
\[
\max_{q\in K} \langle n_i, q\rangle = \langle n_i, q_i\rangle.
\]
By the assumption that $\inte (K+t) \ni q_i$,
\[
\langle n_i, t\rangle + \max_{q\in K} \langle n_i, q\rangle  = \max_{q\in K} \langle n_i, q+t\rangle  > \langle n_i, q_i\rangle = \max_{q\in K} \langle n_i, q\rangle,
\]
hence $\langle n_i, t\rangle > 0$, and summing such inequalities, we obtain
\begin{equation}
\label{equation:no-translate}
\left\langle \sum_i \lambda_i n_i, t\right\rangle = \langle 0, t\rangle > 0,
\end{equation}
which is a contradiction. We conclude that a shortest closed billiard trajectory $Q_{min} = \{q'_1, \ldots, q'_{m'}\}$ must be an element of some $\mathcal P_{m'}(K)$.

Now we go in the opposite direction and consider a polygonal line $Q=\{q_1,\ldots, q_m\}\in\mathcal P_m(K)$ on which the minimum is attained, including the minimum with respect to varying $m$. The previous paragraph shows that $\ell_T(Q) \le \ell_T(Q_{min})$. Applying the Helly theorem, we readily see that we can replace $Q$ by a subset with at most $m\le n+1$ points keeping the property of not fitting into a smaller homothet of $K$. In order to finish the proof, we must show that $Q$ is a generalized billiard trajectory on $K$.

We can find a translate $K+t$ that contains $Q$; such a translate must exist because otherwise we could take a smaller homothet of $Q$, still not fitting into the interior of $K$; so $Q$ would not be the length minimizer in $\mathcal P_m(K)$. By~\cite[Lemma~2.2]{bb2009}, the assumption that $Q$ does not fit into a smaller homothet of $K$ is certified, possibly after omitting the $q_i$ lying in the interior of $K+t$, by considering a set of halfspaces $H^+_i\supseteq K+t$, with respective complementary halfspaces $H^-_i$ supporting $K+t$ such that $q_i\in H^-_i\cap K$, and the intersection $\cap_{i=1}^m H^+_i$ is \emph{nearly bounded} (that is lies between two parallel hyperplanes). This actually means that the outer normals $n_i$ to $K+t$ at the $q_i$ can be non-negatively combined to zero. From here on we assume without loss of generality that $t=0$ and write $K$ instead of $K+t$.

We then observe that varying the $q_i$ inside their respective $H^-_i$ (and allowing to get outside $K$) we never obtain a configuration that can be put into a smaller homothet of $K$, because a smaller homothet of $K$ has to miss some $H^-_i$. This is established by the same argument with normals surrounding the origin resulting in (\ref{equation:no-translate}). Now let us try to minimize the length $\ell_T (q_1,\ldots, q_m)$ over 
\[
\mathcal H = \{(q_1, \ldots, q_m) : \forall i\ q_i\in H^-_i\}.
\] 

We have shown that $\mathcal H \subseteq \mathcal P_m(K)$ and therefore $Q$ is also a length minimizer in $\mathcal H$. Now we conclude from minimizing the length that every $q_i$ must either be a ``fake'' vertex where $Q$ actually does not change its direction, or a vertex where $Q$ reflects from $H^-_i$ according to (\ref{equation:reflection}); the latter is readily obtained with the Lagrange multiplier method from the minimal length assumption. The ``fake'' vertices may be again omitted keeping the property $Q\in \mathcal P_m(K)$ with $m\le n+1$, since the triangle inequality holds for asymmetric norms as usual if we keep the order of the points. The reflection points $q_i$ are on $\partial K$, and the normals to $K$ at $q_i$ must equal the normals to the respective $H^+_i$. So we conclude that $Q$ is a billiard trajectory of $K$ obeying (\ref{equation:reflection}) and $\ell_T(Q) \ge \ell_T(Q_{min})$. Since the opposite inequality is established in the first half of the proof, the proof is complete.
\end{proof}

\section{Derivation of classical and of one new result}

\subsection{Monotonicity of $\xi_T(K)$}

Let us show how the results of~\cite{aao2012} on the function $\xi_T(K)$ follow easily from Theorem~\ref{theorem:bezdeks}. First, the monotonicity 
\begin{equation}
\label{equation:monotonicity}
\xi_T(K) \le \xi_T(L)\ \text{when }\ K\subseteq L 
\end{equation}
follows easily because $\mathcal P_m(K)\supseteq \mathcal P_m(L)$ and the minimum can only get smaller on a larger set.

\subsection{Symmetry}

To prove the Brunn--Minkowski type inequality, like in~\cite{aao2012}, we need the following equality:
\begin{equation}
\label{equation:symmetry}
\xi_T(K) = \xi_K(T).
\end{equation}
This is obvious in the symplectic approach; the idea~\cite{tabachnikov2005geometry} is essentially that closed billiard trajectories correspond to critical points of the action functional 
\[
\sum_{i=1}^m \langle p_{i+1}, q_{i+1} - q_i\rangle = \sum_{i=1}^m \langle p_{i} - p_{i+1}, q_i\rangle
\]
with constraints $q_1,\ldots, q_m\in \partial K$ and $p_1,\ldots, p_m\in \partial T$, and the value of this functional at a critical point equals 
\[
\sum_{i=1}^m \|q_{i+1}- q_i\|_T = \sum_{i=1}^m \|p_{i} - p_{i+1}\|_K.
\]
This argument uses the smoothness of $K$ and $T$ in an essential way, but the monotonicity property allows to approximate any convex body by smooth bodies from below and from above, and then to pass to the limit.

\subsection{Brunn--Minkowski-type inequality}

Having noted all this, we observe that for the Minkowski sum $S+T$ in $V^*$ we have in $V$: 
\[
\|\cdot \|_{S+T} = \|\cdot \|_S + \|\cdot \|_T.
\]
Then it follows that
\[
\xi_{S+T}(K) \ge \xi_S(K) + \xi_T(K)
\]
because the minimum of the sum of functions is no less that the sum of the minima. After applying~(\ref{equation:symmetry}) this reads: 
\begin{equation}
\label{equation:bm}
\xi_T(K+L) \ge \xi_T(K) + \xi_T(L).
\end{equation}

\subsection{Estimates on $\xi_T(K)$}

We can even prove something new with this technique, or the technique of~\cite{aao2012}.

\begin{definition}
Following~\cite{bb2009}, we call $K$ \emph{$2$-periodic with respect to $T$} if one of its shortest generalized billiard trajectories bounces on $\partial K$ only twice. 
\end{definition}

We recall the main result of~\cite{aaok2013}:

\begin{theorem}[Artstein-Avidan, Karasev, Ostrover, 2013]
\label{theorem:aaok}
If $K$ and $T$ are centrally symmetric and polar to each other $(T=K^\circ)$ then $\xi_T(K) = 4$. $K$ is $2$-periodic with respect to $T$ and every segment $[-q, q]$, for any $q\in\partial K$, is a shortest generalized billiard trajectory.
\end{theorem}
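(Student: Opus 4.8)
The plan is to establish the two bounds $\xi_T(K)\le 4$ and $\xi_T(K)\ge 4$ separately, working entirely with the combinatorial description of $\xi_T(K)$ from Theorem~\ref{theorem:bezdeks} and its Remark; this way no smoothness hypothesis on $K$ or $T$ is needed. The first thing I would record is that, since $T=K^\circ$, the norm $\|\cdot\|_T=\|\cdot\|_{K^\circ}$ has unit ball $T^\circ=K$. Because $K$ is centrally symmetric, this is a genuine reversible norm, and $\|q\|_T=1$ precisely when $q\in\partial K$.

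For the upper bound I would test the formula of Theorem~\ref{theorem:bezdeks} on the two-point configuration $(q,-q)$ for an arbitrary $q\in\partial K$. Its length is $\ell_T(q,-q)=\|2q\|_T+\|{-2q}\|_T=4$ by homogeneity and reversibility. It remains to check $(q,-q)\in\mathcal P_2(K)$, that is, that $\{q,-q\}$ does not fit into the interior of any translate $K+t$. I would argue by contradiction: if $q,-q\in\inte(K+t)$, pick a support functional $u$ of $K$ at $q$ (so $\langle u,q\rangle=h_K(u)$); the condition $q\in\inte(K+t)$ forces $\langle u,t\rangle>0$, while by central symmetry $-u$ supports $K$ at $-q$ and the condition $-q\in\inte(K+t)$ forces $\langle u,t\rangle<0$, a contradiction. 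Hence $\xi_T(K)\le\ell_T(q,-q)=4$.

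The heart of the matter is the lower bound, namely that every $P=\{q_1,\dots,q_m\}\in\mathcal P_m(K)$ satisfies $\ell_T(P)\ge 4$ (this reproves \eqref{equation:symm-estimate}). I would prove the contrapositive via a purely metric covering lemma: if a closed polygon has $\|\cdot\|_T$-length $L<4$, then its vertex set lies in $\inte(K+t)$ for a suitable $t$. To produce $t$ I would view the polygon as a closed curve, choose two points $a,b$ on it splitting it into two arcs of equal length $L/2$, and set $t=\tfrac12(a+b)$. For any point $x$ on one of the arcs, writing $s$ and $L/2-s$ for its two arc-distances to $a$ and to $b$, the triangle inequality gives $\|x-a\|_T\le s$ and $\|x-b\|_T\le L/2-s$, whence $\|x-t\|_T=\tfrac12\|(x-a)+(x-b)\|_T\le\tfrac14 L<1$. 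Thus every $q_i$ lies in $\inte(K+t)$, contradicting $P\in\mathcal P_m(K)$; therefore $\ell_T(P)\ge4$ and $\xi_T(K)\ge4$.

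Combining the bounds gives $\xi_T(K)=4$. Finally, for each $q\in\partial K$ the configuration $(q,-q)$ lies in $\mathcal P_2(K)$ and realizes the minimal value $4$, so by Theorem~\ref{theorem:bezdeks} and its Remark it is a shortest generalized billiard trajectory; since it bounces only twice, $K$ is $2$-periodic with respect to $T$. The step I expect to demand the most care is the covering lemma: I must guarantee the \emph{strict} inequality $\|x-t\|_T<1$ so that the points land in the open set $\inte(K+t)$, and I must check that the arc-bisection argument is valid for an arbitrary (possibly non-convex or self-intersecting) closed polygon rather than only a convex one. The reversibility of $\|\cdot\|_T$ coming from central symmetry is exactly what keeps the arc-distance estimates symmetric and makes the whole argument go through.
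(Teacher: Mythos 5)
Your proposal is correct and follows essentially the same route as the paper: the lower bound is exactly the paper's folklore covering argument (bisect the closed curve into two arcs of equal length, center a translate of $K$ at the midpoint of the chord, and use the triangle inequality plus reversibility of the norm to trap all vertices in the open translate), and the upper bound comes from the two-bounce configuration $(q,-q)$. The only cosmetic difference is that you verify $(q,-q)\in\mathcal P_2(K)$ explicitly via a support functional, where the paper simply observes that $[q,-q]$ is a billiard trajectory of length $4$; both are fine.
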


\begin{remark}
There may be other minimal trajectories that are not $2$-bouncing if $K$ is not strictly convex. This can be seen already for the square $K=[-1,1]^2$.
\end{remark}

Having developed the appropriate technique, we give:

\begin{proof}[The short new proof of Theorem~\ref{theorem:aaok}]
Let us show that $\xi_T(K) \ge 4$. From Theorem~\ref{theorem:bezdeks} we conclude that it is sufficient to show that any closed polygonal line of length (in the given norm) less than $4$ can be covered by an open unit ball. This is done with the well-known folklore argument that follows.

Assume a closed polygonal line $P$ has length less than $4$. Take points $x,y\in P$ that partition $P$ into two parts of equal lengths, each part will have length less than $2$. For any $z\in P$, lying in either of the parts, we compare the straight line segments and the segments of $P$ and deduce
\[
\|z-x\| + \|z-y\| < 2
\]
from the triangle inequality.

Let $o$ be the midpoint of the segment $[xy]$. From the triangle inequality we also have
\[
\|z - o\| \le \frac{1}{2}\left(\|z-x\| + \|z-y\|\right) <1.
\]
So we have proved that $P$ is covered by an open ball (a translate of the interior of $K$) with radius $1$ centered at $o$. By Theorem~\ref{theorem:bezdeks} this is not a billiard trajectory in $K$.

So $\xi_T(K) \ge 4$ and actually the equality holds since every segment $[q,-q]$ with $q\in\partial K$ is a billiard trajectory of length $4$.
\end{proof}

\begin{center}
\includegraphics{pic/bezdeks-billiards-figures-10}	

\f \label{fig:covering by ball} Explanation of the proof of Theorem~\ref{theorem:aaok}.
\end{center}

\begin{remark}
Let $K$ be strictly convex. If the length of $P$ were $4$ then in the above argument the equality $\|z-x\| + \|z-y\| = 2$ will hold at most once on either half of $P$. So a translate of $K$ covers $K$ and $P$ has at most $2$ bounces. Actually, one bounce is impossible, so a $2$-bouncing trajectory is the only case of equality, and this trajectory must be the segment $[q,-q]$ for some $q\in\partial K$. If $K$ is not strictly convex then other minimal trajectories also exist.
\end{remark}

\begin{remark}
If $K$ is a square in the plane, which is not smooth and not strictly convex, then there are plenty of minimal trajectories. Here a minimal trajectory is understood as something providing the minimum to the right hand side of the defining equation in Theorem~\ref{theorem:bezdeks}. Any segment connecting the two opposite sides of $K$ is such, and some of the quadrangles with vertices on the four sides of $K$ are also such.
\end{remark}

As another simple exercise, we establish the following result:

\begin{theorem}
\label{theorem:2bounce}
Let $K$ be $2$-periodic with respect to $T$ and let $T$ be centrally symmetric. Then $K+\lambda T^\circ$ is also $2$-periodic with respect to $T$ for any $\lambda$. 
\end{theorem}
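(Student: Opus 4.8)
The plan is to reduce the statement to the single identity
\[
\xi_T(K + \lambda T^\circ) = \xi_T(K) + 4\lambda
\]
and to produce a concrete two-bouncing element of $\mathcal P_2(K + \lambda T^\circ)$ that realizes the right-hand side; once such a configuration is seen to have minimal length, $2$-periodicity of $K + \lambda T^\circ$ follows by definition. Since $K$ is $2$-periodic with respect to $T$, I first fix a shortest generalized billiard trajectory of $K$ that bounces only at two points $q_1,q_2\in\partial K$. Let $n\in V^*$ be an outer normal to $K$ at $q_1$; as the normals at the bounce points surround the origin (the argument in the proof of Theorem~\ref{theorem:bezdeks}), for two points this forces $-n$ to be an outer normal at $q_2$. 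Because $T$ is centrally symmetric the norm $\|\cdot\|_T$ is reversible, so $\xi_T(K)=\|q_2-q_1\|_T+\|q_1-q_2\|_T=2\|q_2-q_1\|_T$. I carry out the two matching estimates for $\lambda\ge 0$.

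For the lower bound I would invoke the Brunn--Minkowski-type inequality~(\ref{equation:bm}) with $L=\lambda T^\circ$, giving $\xi_T(K+\lambda T^\circ)\ge \xi_T(K)+\xi_T(\lambda T^\circ)$. Now $T^\circ$ and $T=(T^\circ)^\circ$ are centrally symmetric and polar, so Theorem~\ref{theorem:aaok} yields $\xi_T(T^\circ)=4$; combined with the homogeneity $\xi_T(\lambda T^\circ)=\lambda\,\xi_T(T^\circ)$ (immediate from the formula in Theorem~\ref{theorem:bezdeks}, since $\mathcal P_m(\lambda M)=\lambda\,\mathcal P_m(M)$ and $\ell_T$ is positively homogeneous) this gives $\xi_T(\lambda T^\circ)=4\lambda$ and hence $\xi_T(K+\lambda T^\circ)\ge \xi_T(K)+4\lambda$.

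For the upper bound I write down the candidate explicitly. Let $u\in\partial T^\circ$ maximize $\langle n,\cdot\rangle$ over $T^\circ$; by central symmetry of $T^\circ$ the point $-u$ then maximizes $\langle -n,\cdot\rangle$. Using the support functions, $h_{K+\lambda T^\circ}=h_K+\lambda h_{T^\circ}$, the points $c_1=q_1+\lambda u$ and $c_2=q_2-\lambda u$ lie on $\partial(K+\lambda T^\circ)$ with outer normals $n$ and $-n$. These two normals combine to zero with positive coefficients, so the very argument of Theorem~\ref{theorem:bezdeks} shows $C=\{c_1,c_2\}\in\mathcal P_2(K+\lambda T^\circ)$. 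Reversibility of $\|\cdot\|_T$ and the triangle inequality then give
\[
\ell_T(C)=2\|c_2-c_1\|_T=2\|(q_2-q_1)-2\lambda u\|_T\le 2\bigl(\|q_2-q_1\|_T+2\lambda\bigr)=\xi_T(K)+4\lambda,
\]
using $\|u\|_T=1$. Since $C$ is admissible we also have $\ell_T(C)\ge \xi_T(K+\lambda T^\circ)$, and combining this with the lower bound squeezes all three quantities to $\xi_T(K)+4\lambda$. Thus $C$ is a shortest generalized billiard trajectory and it bounces exactly twice, i.e.\ $K+\lambda T^\circ$ is $2$-periodic with respect to $T$.

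The hard part will be the case $\lambda<0$, where $K+\lambda T^\circ$ is read as the erosion $K\ominus|\lambda|T^\circ$. The construction of $C$ above goes through verbatim (now $c_1=q_1+\lambda u$ moves the bounce points \emph{inward}), so it still delivers the upper bound $\xi_T(K+\lambda T^\circ)\le \xi_T(K)+4\lambda$; the obstruction is that Brunn--Minkowski now points the wrong way: writing $K=(K+\lambda T^\circ)+|\lambda|T^\circ$ and applying~(\ref{equation:bm}) again only bounds $\xi_T(K+\lambda T^\circ)$ from above, never from below. I would therefore prove the missing lower bound directly via the covering criterion behind Theorem~\ref{theorem:bezdeks}, using the equivalence $A\subseteq (K\ominus|\lambda|T^\circ)+t \iff A+|\lambda|T^\circ\subseteq K+t$ to reduce it to showing that the fattened set $P+|\lambda|T^\circ$ attached to any short closed polygonal line $P$ fits into the interior of a translate of $K$. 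Controlling the length lost under this fattening with the sharp constant $4|\lambda|$ (rather than the crude $2|\lambda|(n+1)$ that a naive Helly bound produces) is the delicate point; absent such an argument one restricts to the range of $\lambda$ for which $K+\lambda T^\circ$ is a genuine convex body and treats the erosion case as the residual difficulty of the theorem.
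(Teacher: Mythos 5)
Your argument for $\lambda\ge 0$ is correct and is essentially the paper's proof: exhibit an explicit two-point configuration on $\partial(K+\lambda T^\circ)$ whose normals are $\pm n$, and force its minimality via the Brunn--Minkowski-type inequality~(\ref{equation:bm}) together with $\xi_T(\lambda T^\circ)=4\lambda$. The only cosmetic difference is that the paper displaces $q_1,q_2$ by $\mp\lambda q$ with $q\in\partial T^\circ$ chosen positively proportional to $q_2-q_1$ (so the candidate is the old segment extended and its length is exactly $\xi_T(K)+4\lambda$ with no triangle inequality), whereas you displace by the support point $u$ of $n$ and then squeeze; minimality of $\{q_1,q_2\}$ makes these the same point anyway. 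Your concern about $\lambda<0$ is legitimate but does not put you behind the paper: its proof likewise relies on~(\ref{equation:bm}) in a way that only works for the genuine Minkowski sum, so the erosion reading of ``$K+\lambda T^\circ$'' is not actually covered there either.
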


\begin{proof}
Consider one of the shortest closed billiard trajectories in $K$ bouncing at $q_1$ and $q_2$. From Theorem~\ref{theorem:aaok} we also know that $\xi_T(T^\circ) = 4$ and we can find a pair $\{-q, q\}\in \partial T^\circ$ that gives a shortest closed billiard trajectory in $T^\circ$ with length $4$ and such that $q$ is proportional to $q_2-q_1$. The minimality assumption for $\{q_1, q_2\}$ implies that the normals $-p$ and $p$ to $K$ at $q_1$ and $q_2$ are the same as the normals to $T^\circ$ at $-q$ and $q$ respectively.

Then the pair of points $\{q_1-\lambda q, q_2+\lambda q\}$ is in the boundary of $K+\lambda T^\circ$ and the normals to $K+\lambda T^\circ$ at these points are again $-p$ and $p$. Now it follows that $\{q_1-\lambda q, q_2+\lambda q\}$ is a closed billiard trajectory in $K+\lambda T^\circ$ of length $\xi_T(K) + \lambda \xi_T(T^\circ)$. From~(\ref{equation:bm}) it follows that this trajectory is minimal.
\end{proof}

\section{Attempt toward the non-symmetric Mahler's conjecture}
\label{section:mahler}

In~\cite{aaok2013} Mahler's conjecture $\vol K\cdot \vol K^\circ \ge \frac{4^n}{n!}$ for centrally symmetric convex $n$-dimensional $K$ was reduced, assuming the Viterbo conjecture on symplectic capacities, to proving that 
\[
\xi_{K^\circ} (K) \ge 4,
\]
which is true, see Theorem~\ref{theorem:aaok} in the previous section.

Dropping the assumption of the central symmetry, the corresponding version of Mahler's conjecture becomes (see~\cite{apbtz2013}):
\[
\vol K \cdot \vol K^\circ \ge \frac{(n+1)^{n+1}}{(n!)^2}
\]
for convex bodies $K\subset\mathbb R^n$ containing the origin in the interior. Again, assuming Viterbo's conjecture, in order to deduce from it the non-symmetric Mahler conjecture, one would have to prove:
\begin{equation}
\label{equation:incorrect-bound}
\xi_{K^\circ}(K) \ge \left(\frac{(n+1)^{n+1}}{n!}\right)^{1/n},
\end{equation}
the right hand side being asymptotically $e$ by Stirling's formula. In fact, already for $n=2$ it is easy to check by hand, or look at Theorem~\ref{theorem:billiard-nonsymm} below, that the sharp estimate is 
\[
\xi_{K^\circ}(K) \ge 3,
\]
while (\ref{equation:incorrect-bound}) gives the number
\[
\left(\frac{3^3}{2}\right)^{1/2},
\]
which is greater than $3$. For higher dimensions, there also remains a gap between the actual lower bound for the billiard trajectory length and the bound needed to establish the non-symmetric Mahler conjecture, assuming the Viterbo conjecture. 

Namely, we are going to prove:

\begin{theorem}
\label{theorem:billiard-nonsymm}
If $K\subset \mathbb R^n$ is a convex body containing the origin in its interior then 
\[
\xi_{K^\circ} (K) \ge 2 + 2/n,
\]
and the bound is sharp.
\end{theorem}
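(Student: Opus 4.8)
The plan is to prove the contrapositive of the lower bound via Theorem~\ref{theorem:bezdeks}. Since $\xi_{K^\circ}(K)=\min_{m}\min_{P\in\mathcal P_m(K)}\ell_{K^\circ}(P)$ with the minimum attained at $m\le n+1$, it suffices to show that every closed polygonal line $P=(q_1,\dots,q_m)$ that does not fit into the interior of a translate of $K$ satisfies $\ell_{K^\circ}(P)\ge 2+2/n$; recall that here $\|w\|_{K^\circ}=\max_{p\in K^\circ}\langle p,w\rangle$ is the gauge of $K$. First I would pass to the minimal enclosing homothet: let $rK+t$ be the smallest positive homothet of $K$ containing $P$. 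Because $P\in\mathcal P_m(K)$ we have $r\ge 1$. By the optimality of the enclosing homothet (the same ``nearly bounded / normals surrounding the origin'' fact from \cite[Lemma~2.2]{bb2009} used in the proof of Theorem~\ref{theorem:bezdeks}), some contact vertices $q_{a}$ of $P$ lie on $\partial(rK+t)$ and their outer unit normals $\nu_a\in\partial K^\circ$ satisfy $\sum_a\beta_a\nu_a=0$ for suitable $\beta_a>0$; by Carath\'eodory I keep at most $n+1$ of them. Dropping the non-contact vertices while preserving the cyclic order only decreases the length (the triangle inequality holds for asymmetric norms when the order is kept), so $\ell_{K^\circ}(P)$ is at least the length of the contact polygon. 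Setting $y_a=(q_a-t)/r\in\partial K$, we have $\langle\nu_a,y_a\rangle=1=\max_{x\in K}\langle\nu_a,x\rangle$, and the contact polygon has length $r\sum_a\|y_{a+1}-y_a\|_{K^\circ}$; since $r\ge1$ it remains to prove $\sum_a\|y_{a+1}-y_a\|_{K^\circ}\ge 2+2/n$.

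Next I would replace $K$ by a simplex. Because $\langle\nu_a,\cdot\rangle\le 1$ supports $K$ at $y_a$, the polytope $\Delta=\{x:\langle\nu_a,x\rangle\le 1\ \forall a\}$ contains $K$ and still contains the origin in its interior (as $\langle\nu_a,0\rangle=0$); when exactly $n+1$ normals are active $\Delta$ is a genuine (bounded) simplex, boundedness being guaranteed by $\sum_a\beta_a\nu_a=0$. Since $K\subseteq\Delta$ we have $\|\cdot\|_{K^\circ}\ge\|\cdot\|_{\Delta^\circ}$, while each $y_a$ still lies on the facet $\{\langle\nu_a,\cdot\rangle=1\}$ of $\Delta$. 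Hence it is enough to prove the inequality for $\Delta$, where the gauge takes the explicit form $\|w\|_{\Delta^\circ}=\max_a\langle\nu_a,w\rangle$. If fewer than $n+1$ normals are active, the same argument carried out inside the linear span of the $\nu_a$ reduces to a lower-dimensional simplex and yields the \emph{stronger} bound $2+2/(k-1)\ge 2+2/n$ for a $k$-gon; the base case $k=2$, where an AM--GM estimate on $\|w\|_{\Delta^\circ}\,\|-w\|_{\Delta^\circ}\ge\|w\|_{\Delta^\circ}+\|-w\|_{\Delta^\circ}$ already gives $\ge 4$, is handled directly.

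After relabeling so that the trajectory visits the facets in the order $0,1,\dots,n$, the remaining task is the finite optimization
\[
\min\ \sum_{a}\max_b\langle\nu_b,\,y_{a+1}-y_a\rangle,\qquad \langle\nu_a,y_a\rangle=1,\ \ \langle\nu_b,y_a\rangle\le 1,
\]
over one point $y_a$ per facet of $\Delta$. Keeping only the term $b=a+1$ gives the clean bound $\sum_a\|y_{a+1}-y_a\|_{\Delta^\circ}\ge(n+1)-\sum_a\langle\nu_{a+1},y_a\rangle$, which is sharp at the symmetric configuration, where each edge of the trajectory points along a vertex direction of $\Delta$ so that the maximizing index is exactly $b=a+1$. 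I would then locate the minimizer by Lagrange multipliers together with the balancing forced by $\sum_a\beta_a\nu_a=0$: the optimal trajectory has all edges equal to $\mu$ times the vertex directions of $\Delta$ with $\mu=2/n$, so its total length is $(n+1)\cdot\tfrac2n=2+2/n$. Sharpness of the theorem then follows by taking $K$ to be a simplex with the origin at its centroid, for which this trajectory—bouncing once on each facet—lies in $\mathcal P_{n+1}(K)$ and realizes the bound.

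The step I expect to be the main obstacle is this last optimization. The naive single-index estimate $(n+1)-\sum_a\langle\nu_{a+1},y_a\rangle$ is tight only at the balanced configuration and is generally too weak, since the maximizer of $\sum_a\langle\nu_{a+1},y_a\rangle$ need not be the minimizer of the tour; one must genuinely control the full $\max_b$ and verify that the minimum is $\ge 2+2/n$ for \emph{every} simplex with the origin in an arbitrary interior position and for every cyclic order of the facets. Reconciling the asymmetry of $\|\cdot\|_{\Delta^\circ}$ with this ordering freedom, and cleanly disposing of the degenerate cases with fewer than $n+1$ active facets, is where the real work lies.
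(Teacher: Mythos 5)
Your overall plan (reduce to a simplex via Theorem~\ref{theorem:bezdeks}, then optimize over inscribed closed polygons) has the right shape, but the proof has a genuine gap exactly where you say it does: the final optimization over points $y_a\in F_a$ of the circumscribed simplex is never carried out. The one concrete estimate you offer, keeping only the index $b=a+1$ inside the max, gives $\sum_a\|y_{a+1}-y_a\|_{\Delta^\circ}\ge (n+1)-\sum_a\langle\nu_{a+1},y_a\rangle$, and since each $\langle\nu_{a+1},y_a\rangle\le 1$ this only yields a nonnegative lower bound, which is vacuous; the Lagrange-multiplier/balancing step that is supposed to rescue it is only asserted. Worse, your reduction makes the remaining problem harder than it needs to be: after replacing $K$ by the circumscribed body $\Delta=\{x:\langle\nu_a,x\rangle\le1\ \forall a\}$ you must bound the length of an \emph{arbitrary} closed polygon with one vertex per facet, i.e.\ essentially re-prove the core case $\xi_{\Delta^\circ}(\Delta)\ge 2+2/n$ for every simplex, every interior position of the origin, and every cyclic order of facets, with no structural information left about the edges.

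The key idea you are missing is that the paper's reduction goes in the opposite direction: it replaces $K$ by an \emph{inscribed} simplex adapted to the edge directions of $P$. Writing $v_i\in\partial K$ for the boundary point positively proportional to $q_{i+1}-q_i$, one has $\|q_{i+1}-q_i\|_{K^\circ}=t_i$ where $q_{i+1}-q_i=t_iv_i$, and setting $L=\conv\{v_i\}$ preserves the length \emph{exactly} ($\ell_{K^\circ}(P)=\ell_{L^\circ}(P)$) while $P$ still fails to fit into a smaller homothet of $L\subseteq K$. By construction every edge of $P$ is parallel to a vertex direction of $L$, so the trajectory becomes rigid: $q_i$ must lie on the facet $F_i$ opposite $v_i$, the closing condition $\sum_i t_iv_i=0$ determines the $t_i$ up to a common factor, and in barycentric coordinates $(m_0,\dots,m_n)$ of the origin one computes $t_i=m_i/M$ with $M=\sum_{k<l}m_km_l$, whence $\ell_{L^\circ}(P)=1/M\ge(2n+2)/n$ by Schur concavity of the elementary symmetric function. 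This is precisely the computation your circumscribed-simplex route postpones and never performs: without the inscribed construction there is no mechanism forcing the edges to point at the vertices, and the ``full $\max_b$'' you worry about cannot be controlled. (The degenerate cases are also cleaner in the paper's setup: $\dim L<n$ simply means $P$ lies in a translate of $\lin L$ and induction on dimension applies, whereas your $\Delta$ becomes unbounded and its gauge degenerate when fewer than $n+1$ normals are active.)
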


This theorem shows that the non-symmetric Mahler conjecture is out of reach of the billiard approach of~\cite{aaok2013}.

\begin{proof}
We invoke Theorem~\ref{theorem:bezdeks} and consider a closed polygonal line $P$ not fitting into a smaller homothet of $K$. By the same theorem we can assume that $P$ has vertices $q_1,\ldots, q_m$ with $m\le n + 1$.

	\begin{center}
	\begin{tabular}{p{7.5cm}p{7.5cm}}
		\includegraphics{pic/bezdeks-billiards-figures-14}&
		\includegraphics{pic/bezdeks-billiards-figures-15}
		\includegraphics{pic/bezdeks-billiards-figures-16}\\
		\f \label{fig:length of the segment} Measuring the length of a directed segment.
		&
		\f \label{fig:erasing} Replacing $K$ with $L$.
	\end{tabular}
	\end{center}

Observe that the norm $\|w\|_{K^\circ}$ of a vector $w\in V$ has a very simple meaning: Let $v\in \partial K$ be the vector positively proportional to $w$ and take 
\[
\|w\|_{K^\circ} = \frac{|w|}{|v|}
\]
using the standard Euclidean norm $|\cdot|$ (Figure~\ref{fig:length of the segment}). Now to measure the length of $P$ we take $v_1,\ldots, v_m\in\partial K$ to be positively proportional to $q_2-q_1, \ldots, q_1-q_m$ respectively; it follows that the origin can be expressed as a positive combination of the vectors $\{v_i\}_{i=1}^m$. If we replace $K$ by the body $L = \conv \{v_i\}_{i=1}^m$ of possibly smaller dimension, then it is easy to see that $L$ still contains the origin and
\[
\ell_{K^\circ}(P) = \ell_{L^\circ}(P),
\] 
since $v_i$'s are still on the boundary of $L$ (Figure~\ref{fig:erasing}). Moreover, $P$ cannot fit into a smaller homothet of $L$, since it does not fit into a smaller homothet of the larger body $K\supseteq L$. In this argument $\dim L$ may become less than $\dim K$; in this case we use induction on dimension, since we have the monotonicity of the estimate $2+2/(n-1) > 2 + 2/n$. The other case $\dim K = \dim L = n$ is only possible when $m=n+1$. We can therefore assume from the start that $L$ is a simplex.

Now we are in the following situation, changing the indexing of vertices slightly. $L$ is a simplex with vertices $v_0, \ldots, v_n$ and their respective opposite facets $F_0,\ldots, F_n$, and $P$ is a closed polygonal line with vertices $q_0,\ldots, q_n$. From the first step of our construction, the following relations hold:
\begin{equation}
\label{equation:polyline}
q_{i+1}-q_i = t_i v_i, \quad t_i > 0.
\end{equation}

Also, we can assume that $q_0,\ldots, q_n$ lie on the boundary of $L$, otherwise we can translate $P$ and inflate $L$, keeping the condition that $P$ does not fit into a smaller homothet of $L$, having eventually $P\subseteq L$ (Figure~\ref{fig:homothety of simplex}).
By this the quantity $\ell_{L^\circ}(P)$ may only become smaller, and either all the vertices of $P$ will be on $\partial L$ or the dimension will drop and we use induction.

\begin{center}
\begin{tabular}{cc}
	\includegraphics{pic/bezdeks-billiards-figures-17}	&
	\includegraphics{pic/bezdeks-billiards-figures-18}\\	
	
	\f \label{fig:homothety of simplex} The inflation of $L$. &
	\f \label{fig:billiard in a triangle} A billiard trajectory in the triangle.
\end{tabular}
\end{center}

So either we use induction and drop the dimension of $L$, or we use  (\ref{equation:polyline}) to conclude that the segment $[q_i, q_{i+1}]$ has direction $v_i$, the vector from the origin to a vertex of $L$. The latter implies that, if we look at $L$ along the line of sight $v_i$ then we see the facet $F_i$ and (strictly) do not see the other facets. Therefore the segment $[q_i, q_{i+1}]$ must start at $F_i$ and point into the interior of $L$, its endpoint $q_{i+1}$ must lie on some other $F_j$ ($j\neq i$), and if we extend this segment to a half-line beyond $q_{i+1}$ it must leave $L$ at $q_{i+1}$. Assuming $q_i\neq q_{i+1}$ (otherwise we have less points and the dimension drops) we obtain, in particular, that the point $q_i$ can only lie in the relative interior of its respective $F_i$.

We see that $q_i$ is the projection of $q_{i+1}$ onto $F_i$ parallel to $v_i$.
If we apply these projections cyclically starting from $q_i\in F_i$ and ending at the same point then we obtain a map that takes $F_i$ into its relative interior and that is linear on $F_i$. Such a map has a unique fixed point. So it follows that having chosen $L$ with a cyclic order on its facets we can reconstruct the considered polygonal line $P$ uniquely. 

Another way to show the uniqueness is to observe that the condition (\ref{equation:polyline}) implies $\sum_{i=0}^n t_i v_i = 0$ and therefore determines the $t_i$ uniquely up to a positive multiple. Hence the polygonal line $P$ is determined uniquely up to translation and a positive homothety, and the additional property $q_i\in F_i$ fixes it completely.

Now we are going to consider everything in barycentric coordinates. Let $(m_0,\ldots, m_n)$ be the barycentric coordinate of the origin in $L$. 
Then it is not hard to express the $q_i$ in terms of the $v_i$. We are going to index everything cyclically modulo $n+1$ and we put 
\[
M = \sum_{0\le k < l \le n} m_k m_l.
\]
From the Schur concavity of the elementary symmetric functions it follows that $M$ takes its maximum value at $m_0=\dots = m_n = \frac{1}{n+1}$ and therefore $M\le \frac{n}{2n+2}$. We have already shown the uniqueness of the $q_i$ after the choice of the order of the projections along the $v_i$ to facets. It remains to guess the expression for $q_i$ and prove that it gives the solution. Our guess is
\[
q_i=\frac{\sum\limits_{j\ne i} \sum\limits_{k=i}^{j-1} m_jm_k v_j}{M},
\]
where the inner summation goes cyclically from $i$ to $j-1$, so it is allowed that $j-1<i$. First, it is easy to observe that the sum of all coefficients in the numerator equals $M$, because every monomial $m_km_l$ is used precisely once. Therefore we have $q_i\in F_i$. Then we express the vector $q_{i+1}-q_i$:

\[
q_{i+1}-q_i=\frac{\sum\limits_{j\ne i+1} \sum\limits_{k=i+1}^{j-1} m_jm_k v_j
-\sum\limits_{j\ne i} \sum\limits_{k=i}^{j-1} m_jm_k v_j}{M} = \frac{\sum\limits_{j\ne i}m_im_j v_i-\sum\limits_{j\ne i} m_jm_i v_j}{M}.
\]

Since $\sum m_j v_j=0$, we obtain $\sum\limits_{j\ne i} m_jm_i v_j=-m_i^2 v_i$. And from $\sum_j m_j=1$ we get $\sum\limits_{j\ne i}m_im_j v_i+m_i^2 v_i=m_i v_i$.
Finally,
\[
q_{i+1}-q_i=\frac{m_i v_i}{M} \text{ and } t_i=\frac{m_i}{M}.
\]

Now we can bound the sum of $t_i$ from below:
\[
\sum\limits_i t_i=\sum\limits_{i}\frac{m_i}{M}=\frac{1}{M}\geq \frac{2n+2}{n}.
\]
This means that the length of $P$ in the norm with unit ball $L$ is at least $2 +2/n$, and with all $m_i$ equal this bound is actually attained. 

Since it is possible to approximate $L$ by a smooth body, whose polar is also smooth, keeping the trajectory and its length the same, we conclude that the bound is sharp even in the class of smooth bodies $K$ with smooth polars.
\end{proof}

\begin{remark}
A more rigorous analysis of the trajectory $q_1\dots q_{n+1}$ (Figure~\ref{fig:billiard in a triangle}) shows that a trajectory in the simplex passing through every facet is locally minimal if and only if its segments are parallel to the segments $ov_i$ in some order.

One curious thing follows from the proof of the theorem. If we fix a simplex $L$ with the origin inside then there are $(n-1)!$ cyclic orders on the vertices, and therefore $(n-1)!$ trajectories inscribed in it with edges parallel to the respective vectors $v_i$. These (billiard) trajectories are evidently different, but all corresponding edges in all the trajectories have the same length.

One consequence of this observation is that if we consider a trajectory $q_0\dots q_n$ and draw the hyperplane $h_i$ through the midpoint of every segment $q_iq_{i+1}$, parallel to the facet $F_i$ of $L$, then all these hyperplanes $h_i$ intersect in a single point.
\end{remark}

\begin{center}
\includegraphics{pic/bezdeks-billiards-figures-19}	

\f The two trajectories in the two-dimensional triangle.
\end{center}

The proof of Theorem~\ref{theorem:billiard-nonsymm} also reveals the following formula: Let $\ell_i$ be the length of the Cevian\footnote{\emph{Cevians} of a simplex $L$ are $n+1$ segments connecting the vertices $v_i$ with their respective opposite facets $F_i$ and all having a common point in the interior of $L$.} 
of $L$ passing through the vertex $v_i$ and the origin. Then for any closed polygonal line $P=(q_0,\ldots, q_n)$ with $q_i\in F_i$ and $q_{i+1}-q_i = t_i v_i$ with $t_i>0$ we have
\[
\sum \frac{|q_{i+1}-q_{i}|}{\ell_i}=2. 
\]
Indeed, $\frac{|v_i|}{\ell_i}=\sum \limits_{j\ne i} m_j$, since the $m_i$ are the barycentric coordinates of the origin. So we obtain
\[
 \sum \frac{|q_{i+1}-q_{i}|}{\ell_i} =\frac{\sum \limits_i m_i\left(\sum \limits_{j\ne i} m_j\right)}{M}=\frac{2M}{M}=2.
\]

\bibliography{../bib/karasev}

\begin{thebibliography}{10}

\bibitem{apb2012}
J.-C. \'Alvarez~Paiva and F.~Balacheff.
\newblock Contact geometry and isosystolic inequalities.
\newblock {\em Geometric and Functional Analysis}, 24(2):648--669, 2014.
\newblock \href{http://arxiv.org/abs/1109.4253}{arXiv:1109.4253v2}.

\bibitem{apbtz2013}
J.-C. \'Alvarez~Paiva, F.~Balacheff, and K.~Tzanev.
\newblock Isosystolic inequalities for optical hypersurfaces.
\newblock 2013.
\newblock \href{http://arxiv.org/abs/1308.5522}{arXiv:1308.5522}.

\bibitem{aaok2013}
S.~Artstein-Avidan, R.~Karasev, and Y.~Ostrover.
\newblock From symplectic measurements to the {M}ahler conjecture.
\newblock {\em Duke Mathematical Journal}, 163(11):2003--2022, 2014.
\newblock \href{http://arxiv.org/abs/1303.4197}{arXiv:1303.4197}.

\bibitem{aao2012}
S.~Artstein-Avidan and Y.~Ostrover.
\newblock Bounds for {M}inkowski billiard trajectories in convex bodies.
\newblock {\em International Mathematics Research Notices}, 2012.
\newblock \href{http://arxiv.org/abs/1111.2353}{arXiv:1111.2353}.

\bibitem{bb2009}
D.~Bezdek and K.~Bezdek.
\newblock Shortest billiard trajectories.
\newblock {\em Geometriae Dedicata}, 141:197--206, 2009.

\bibitem{bc1989}
K.~Bezdek and R.~Connelly.
\newblock Covering curves by translates of a convex set.
\newblock {\em The American Mathematical Monthly}, 96(9):789--806, 1989.

\bibitem{ku2008}
G.~Kuperberg.
\newblock From the {M}ahler conjecture to {G}auss linking integrals.
\newblock {\em Geometric and Functional Analysis}, 18(3):870--892, 2008.

\bibitem{ma1939}
K.~Mahler.
\newblock Ein \"{U}bertragungsprinzip f\"ur konvexe {K}\"orper.
\newblock {\em \v{C}asopis pro P\u{e}stov\'{a}n\'{\i} Matematiky a Fysiky},
  68:93--102, 1939.

\bibitem{tabachnikov2005geometry}
S.~Tabachnikov.
\newblock {\em Geometry and billiards}, volume~30.
\newblock Amer Mathematical Society, 2005.

\bibitem{tao2007}
T.~Tao.
\newblock Open question: the {M}ahler conjecture on convex bodies.
\newblock 2007.
\newblock
  \href{http://terrytao.wordpress.com/2007/03/08/open-problem-the-mahler-conjecture-on-convex-bodies/}{terrytao.wordpress.com}.

\end{thebibliography}
\bibliographystyle{abbrv}
\end{document}